\documentclass[a4paper,12pt,reqno]{amsart}
\usepackage{amsmath, amsthm, amssymb, amstext}

\usepackage[left=2.3cm,right=2.3cm,top=2.5cm,bottom=2.5cm]{geometry}

\usepackage{hyperref,xcolor}
\usepackage{tikz}
\usepackage[hyperpageref]{backref}
\hypersetup{pdfborder={0 0 0},colorlinks}

\usepackage{enumitem}
\allowdisplaybreaks
\newtheorem{theorem}{Theorem}[section]
\newtheorem{remark}{Remark}
\newtheorem{lemma}[theorem]{Lemma}
\newtheorem{proposition}[theorem]{Proposition}

\newtheorem{definition}[theorem]{Definition}

\numberwithin{equation}{section}

\title[A sharp eigenvalue theorem for mixed problems]
{A sharp eigenvalue theorem for mixed elliptic problems under mixed boundary conditions}

\author[G. Molica Bisci]{Giovanni Molica Bisci}
\thanks{$^{*}$Corresponding author: lovelesh.sharma@iiserpune.ac.in}
\address{Dipartimento di Matematica e Informatica, Universit\`a degli Studi di Perugia, Italy}
\email{giovanni.molicabisci@uniroma5.it}

\author[L. Sharma]{Lovelesh Sharma$^{*}$}
\address{Department of Mathematics, Indian Institute of Science Education and Research Pune, India}
\email{lovelesh.sharma@iiserpune.ac.in}

\subjclass{47A75, 35J25, 35J20.}
\keywords{Mixed local-nonlocal operators; Mixed boundary conditions; Eigenvalue; Variational method.}

\begin{document}
\begin{abstract}
In this paper, we study a class of eigenvalue problems involving both local and nonlocal operators, namely the classical Laplacian and the fractional Laplacian, under mixed boundary conditions. More precisely, we consider the problem
\begin{equation}\label{1}
\left\{
\begin{aligned}
\mathcal{L}u &= \lambda f(u), \quad u>0 &&\text{in }\Omega,\\
u&=0 &&\text{in }U^c,\\
\mathcal{N}_s(u)&=0 &&\text{in }\mathcal{N},\\
\frac{\partial u}{\partial\nu}&=0
&&\text{on }\partial\Omega\cap\overline{\mathcal{N}},
\end{aligned}
\right.
\tag{$P_\lambda$}
\end{equation}
where
\(
U=\Omega\cup\mathcal{N}\cup
\bigl(\partial\Omega\cap\overline{\mathcal{N}}\bigr),
\)
\(\Omega\subseteq\mathbb{R}^n\) is a bounded open set with smooth boundary, \(\lambda>0\) is a real parameter, \(f\) is  continuous function with \(f(0)=0\), and
\[
\mathcal{L}=-\Delta+(-\Delta)^s,
\qquad s\in(0,1).
\]
We establish a characterization theorem for the existence of positive weak solutions to problem \eqref{1}. Motivated by the classical elliptic framework developed by
Molica Bisci and R\u{a}dulescu \cite{MolicaBisciRadulescu2017},
we establish a corresponding characterization result for mixed
local-nonlocal operators under mixed boundary conditions.
\end{abstract}
\maketitle

\section{Introduction}
We investigate the existence  of solutions to the following elliptic problem
\begin{equation*} \label{Main problem}
\left\{
\begin{aligned}
\mathcal{L}u &= \lambda f(u), \quad u > 0 \quad \text{in } \Omega, \\
u &= 0 \quad \text{in } U^c, \\
\mathcal{N}_s(u) &= 0 \quad \text{in } \mathcal{N}, \\
\frac{\partial u}{\partial \nu} &= 0 \quad \text{in } \partial \Omega \cap \overline{\mathcal{N}},
\end{aligned}
\right.
\tag{$P_\lambda$}
\end{equation*}
where  $U= (\Omega \cup {\mathcal{N}} \cup (\partial\Omega\cap\overline{\mathcal{N}}))$, $\Omega \subseteq \mathbb{R}^n$ is a non empty bounded open set with  smooth boundary $\partial\Omega$, say of class $C^1$ for  $n\geq 3$, $\mathcal{D}$, $\mathcal{N}$ are open subsets of $\mathbb{R}^n\setminus{\bar{\Omega }}$ such that $\overline{{\mathcal{D}} \cup {\mathcal{N}}}= \mathbb{R}^n\setminus{\Omega}$, $\mathcal{D} \cap {\mathcal{N}}=  \emptyset $ and $\Omega\cup \mathcal{N}$ is a bounded set with { sufficiently} smooth boundary.

The following pictures shows some possible configurations of the Dirichlet boundary 
$\mathcal{D}$ and the Neumann boundary $\mathcal{N}$
 with the set $\Omega$ which satisfies our assumptions, (see Fig. 1 and Fig 2). 
\begin{figure}[htp]
\centering
\begin{minipage}{0.48\textwidth}
\centering
\scalebox{0.55}{
\begin{tikzpicture}
  \definecolor{lightgrey}{RGB}{200, 200, 200}
  \fill[lightgrey] (0,0) circle (3cm);
  \fill[white] (0,0) circle (1.5cm);
  \draw[thick] (0,0) circle (3cm);
  \draw[thick] (0,0) circle (1.5cm);
  \node at (-2.2cm,1.5cm) {$\Omega$};
  \node at (0,0) {$\mathcal{N}$};
  \node at (3.4cm,0) {$\mathcal{D}$};
\end{tikzpicture}
}
\caption{Annular domain configuration.}
\label{fig:annular}
\end{minipage}
\hfill
\begin{minipage}{0.48\textwidth}
\centering
\scalebox{0.60}{
\begin{tikzpicture}
  \definecolor{lightgrey}{RGB}{200, 200, 200}
  \fill[lightgrey] (-2,0) circle (2cm);
  \draw[thick] (-2,0) circle (2cm);
  \node at (-2,0) {$\Omega$};
  \fill[lightgrey] (2,0) circle (2cm);
  \draw[thick] (2,0) circle (2cm);
  \node at (2,0) {$\mathcal{N}$};
  \node at (0,2.8) {$\mathcal{D}$};  
\end{tikzpicture}
}
\caption{Disconnected domain configuration.}
\label{fig:disconnected}
\end{minipage}
\end{figure}
Let $\lambda >0$ is a  real parameter, $\nu$ denotes the outward normal on $\partial\Omega\cap \overline{\mathcal{N}}$,
and
 $$\mathcal{L}=  -\Delta+(-\Delta)^{s},~ \text{for}~s \in  (0, 1).$$

The study of mixed operators of the type $\mathcal{L}$ as in the problem \eqref{1} is motivated by several applications where such kind of operators are naturally generated, including the theory of optimal searching, biomathematics, and animal forging for which we refer to \cite{dipierro2022non, dipierro2021description} and the references therein. In applied sciences, they are used for investigating the changes in physical phenomena that have both local and nonlocal effects. For instance, they are present in bi-modal power law distribution systems, see \cite{MR4225516}. Furthermore, they are present in models that are derived from the combination of two distinct scaled stochastic processes. We refer to  \cite{MR1640881} for a comprehensive explanation of this phenomenon. In recent years, there has been a significant amount of work investigating elliptic problems with mixed-type operator $\mathcal{L}$, which contain both local and nonlocal features. The analytical properties of elliptic and parabolic partial differential equations, as well as intergro-differential equations, relies significantly on  the spectrum of associated linearized problems.
In particular, the study of principal eigenvalues is essential in the investigation of non changing sign solutions to semi-linear problems as in \cite{biagi2022breziss} and in local bifurcation  phenomena as well as in stability analysis  (see \cite{berestycki2016definition}).

Over the past few decades, mixed Dirichlet-Neumann boundary value problems associated with elliptic equations of the form
\begin{equation*}
-\Delta u = g(x,u) \quad \text{in } \Omega,
\end{equation*}
have been extensively studied. In particular, several problems arising in physics and engineering, such as wave propagation, heat transfer, electrostatics, elasticity, and hydrodynamics, lead naturally to elliptic equations with mixed Dirichlet-Neumann boundary conditions. For motivation and related discussions, we refer to \cite{MR647124,MR1472351} and the references therein. A detailed analysis of the Laplace operator with mixed Dirichlet-Neumann boundary conditions can be found in \cite{MR2100910, MR2238024,MR3912863}.

Recently, Giacomoni et al. \cite{GiacomoniMukherjeeSharma} studied an eigenvalue problem involving mixed local and nonlocal operators under mixed Dirichlet-Neumann boundary conditions. They established the existence of the first eigenvalue and analyzed several of its qualitative properties, depending on the geometry of the sets $\mathcal{D}$ and $\mathcal{N}$. Moreover, they obtained bifurcation results both from zero and from infinity for asymptotically linear problems associated with this framework. 
In a related direction, Mukherjee et al. \cite{mukherjee2023nonlocal} investigated concave-convex elliptic problems within the same setting. Furthermore, singular critical problems \cite{mukherjeesharma} under this framework have also been studied, showing the applicability of these techniques to a wider class of nonlinear problems.

Motivated by recent developments on nonlocal elliptic equations with mixed boundary conditions, along with the variational techniques developed in \cite{GiacomoniMukherjeeSharma, mukherjee2023nonlocal} and the sharp eigenvalue results for fractional problems obtained in \cite{BisciRadulescuServadei2016}, we study a class of elliptic problems driven by mixed local and nonlocal operators under mixed boundary conditions. 

Our aim is to extend the existing results to this more general framework and to obtain a characterization theorem for the existence of solutions to problem \eqref{1}.
A key tool in our analysis is the validity of a Maximum Principle and regularity results for mixed local-nonlocal problems, recently established in \cite{GiacomoniMukherjeeSharma, mukherjee2023nonlocal}. We also make use of priori estimates for solutions of nonlocal equations, which play an important role in our approach.

Theorem~\ref{mainth} can be seen as an elliptic counterpart, in the setting of mixed local-nonlocal operators with mixed boundary conditions, of the abstract results obtained by Ricceri (see \cite{Ricceri2013} and \cite{Ricceri2014}). In those works, a general critical point theorem in Hilbert spaces is introduced and applied to obtain existence results. Related applications in the fractional framework can also be found in \cite{MolicaBisciRadulescu2017}.

Similar ideas have also been used in more involved settings, such as elliptic equations on fractal domains (see \cite{BisciRadulescu2015}). In addition, the regularity result proved in [\cite{MR4319042}, Theorem~3.7], is used to show that assumption $\mathbf{(h_1)}$ implies condition $\mathbf{(h_2)}$ in Theorem~\ref{mainth}.

Finally, we mention that our work is partly inspired by the classical paper of Brezis and Oswald \cite{BrezisOswald1986}, where semilinear problems driven by the Laplace operator are studied under the assumption that \(f(u)/u\) is decreasing on \((0,+\infty)\). In contrast, our approach relies only on a  condition, namely that \(F(\xi)/\xi^2\) is nondecreasing near the origin. This condition leads to a sublinear behavior of the nonlinearity; for example, for \(f(u)=u^p\), it corresponds to \(p\in(0,1]\), while hypothesis $\mathbf{(h_1)}$ excludes the linear case \(p=1\).

Moreover, unlike the classical setting, our existence result depends on the position of the parameter \(\lambda\) in a suitable interval and does not require any growth condition at infinity. This shows that, in the presence of mixed operators and mixed boundary conditions, local properties of the nonlinearity are sufficient to guarantee the existence of solutions.


\vspace{0.5cm}

\textbf{The article is organized as follows:} In Section~2, we introduce the functional framework appropriate for the mixed operator $\mathcal{L}$ under the mixed boundary conditions and recall some basic notions concerning the Hilbert spaces
\(\mathcal{X}^{1,2}_{\mathcal{D}}(U)\).
In Section~3 is devoted to the proof of Theorem~\ref{mainth}.

\section{Functional setting and Preliminaries}
In this section, we set our notations and formulated the functional framework for \eqref{1}, which are used throughout the paper.
For every $s\in (0,1)$, we recall the fractional Sobolev spaces
$${H^{s}(\mathbb{R}^n)} =  \Bigg\{ u \in L^{2}(\mathbb{R}^n):~~\frac{|u(x) - u(y)|}{|x - y|^{\frac{n}{2} + s}} \in L^{2}({\mathbb{R}^n}\times {\mathbb{R}^n)} \Bigg\} $$ which contain $H^1(\mathbb R^n)$. We assume that $\Omega \cup \mathcal N$ is bounded with a smooth boundary. 
The symbol $U$ is used throughout the article instead of $(\Omega \cup {\mathcal{N}} \cup (\partial\Omega\cap\overline{\mathcal{N}}))$ for sake of clarity.
We define the function space $\mathcal{X}^{1,2}_{\mathcal{D}}(U)$ as 
\begin{align*}
    \mathcal{X}^{1,2}_{\mathcal{D}}(U)  = \{u\in H^1(\mathbb{R}^n) : ~u|_{U} \in H^1_0(U) ~\text{and}~ u \equiv 0~ a.e. ~\text{in}~ {U^c}\}.
\end{align*}
Let us define 
 $$ \eta(u)^2 =  ||\nabla{u}||^2_{L^{2}(\Omega)}+ [u]^2_{s},$$
for $u\in\mathcal{X}^{1,2}_{\mathcal{D}}(U)$, where  $[u]_s$ is the Gagliardo seminorm of $u$ defined by 
 $$[u]^2_{s} = ~ \bigg(\int_{Q} \frac{|u(x)-u(y)|^{2}}{|x-y|^{n+2s}} \, dx dy \bigg)$$ and $Q= \mathbb R^{2n}\setminus (\Omega^c\times \Omega^c)$.
The following Poincar\'e type inequality can be established following the arguments of Proposition 2.4 in \cite{MR4065090} and taking advantage of partial Dirichlet boundary conditions in $U^c$.

\noindent
\begin{proposition}\label{Poin} 
(Poincar\'e type inequality) 
There exists a constant $C = C(\Omega, n, s)$
$> 0$ such that
\begin{align*}
\int_{\Omega} |u|^2 \, dx 
\leq C \bigg( \int_{\Omega} |\nabla u|^2 \, dx 
+ \int_{Q} \frac{|u(x) - u(y)|^2}{|x - y|^{n + 2s}} \, dx dy \bigg),
\end{align*}
for every $u \in \mathcal{X}^{1,2}_{\mathcal{D}}(U)$, i.e.,
\(
\|u\|^2_{L^2(\Omega)} \leq C \, \eta(u)^2.
\)
\end{proposition}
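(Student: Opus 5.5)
The plan is to argue by contradiction and compactness, as in Proposition 2.4 of \cite{MR4065090}, and to use the partial Dirichlet condition $u\equiv 0$ in $U^c$ (with $|\mathcal D|>0$) to exclude nonzero constants.

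Suppose the inequality fails. Then there is a sequence $(u_k)\subset\mathcal{X}^{1,2}_{\mathcal{D}}(U)$ with $\|u_k\|_{L^2(\Omega)}=1$ and $\eta(u_k)\to 0$. In particular $\nabla u_k\to 0$ in $L^2(\Omega)$, so $(u_k)$ is bounded in $H^1(\Omega)$.

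\textbf{Compactness on $\Omega$.} Since $\partial\Omega$ is $C^1$, Rellich's theorem gives a subsequence with $u_k\to u$ in $L^2(\Omega)$. Then $\|u\|_{L^2(\Omega)}=1$ and $\nabla u=0$ weakly in $\Omega$, so $u$ is constant on each connected component of $\Omega$. Since $\Omega$ is bounded and smooth it has finitely many components, but the argument below works componentwise anyway. Pass to a further subsequence with $u_k\to u$ a.e. in $\Omega$.

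\textbf{Using the Dirichlet region.} Since $\mathcal D\times\Omega\subset Q$ and $u_k=0$ a.e. in $\mathcal D\subset U^c$, we have
\[
[u_k]_s^2\ \ge\ \int_{\Omega}\int_{\mathcal D}\frac{|u_k(x)|^2}{|x-y|^{n+2s}}\,dy\,dx .
\]
Fix a bounded set $\mathcal D'\subset\mathcal D$ of positive measure. For $x\in\Omega$, $y\in\mathcal D'$ we have $|x-y|\le R$ for some $R$, so the kernel is bounded below by $R^{-n-2s}$. Hence
\[
|\mathcal D'|\,R^{-n-2s}\,\|u_k\|^2_{L^2(\Omega)}\le [u_k]^2_s\to 0,
\]
which contradicts $\|u_k\|_{L^2(\Omega)}=1$. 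This step alone already gives the inequality directly, with $C=R^{n+2s}/|\mathcal D'|$, since the nonlocal term controls $\|u\|_{L^2(\Omega)}$ by itself.

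\textbf{The obstacle.} The only point needing care is ensuring $|\mathcal D|>0$. Since $\overline{\mathcal D\cup\mathcal N}=\mathbb R^n\setminus\Omega$ and $\Omega\cup\mathcal N$ is bounded, $\mathcal D$ contains the complement of a large ball, so it is a nonempty open set of positive measure. A bounded piece $\mathcal D'$ then exists, and the constant depends only on $\Omega$, $n$, $s$ (together with the fixed geometry of $\mathcal D$).

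I would therefore present the direct kernel lower-bound argument as the proof. The compactness route is only a remark, explaining why the local part alone would not suffice when $\Omega$ touches only $\mathcal N$: for example, the $\Omega$ component in Fig.~\ref{fig:disconnected} does not touch $\mathcal D$.
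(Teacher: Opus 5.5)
Your proof is correct, and the argument you end up presenting differs from the one the paper points to.

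\textbf{Comparison with the paper.} The paper gives no proof of its own. It only says the inequality follows by adapting Proposition 2.4 of the cited reference, using the partial Dirichlet condition. In spirit that is the contradiction--compactness scheme you sketch first: normalize, extract a limit, show it is constant, and kill the constant with the exterior condition. That template produces a non-explicit constant and relies on a compact embedding, hence on some regularity of $\partial\Omega$.

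Your direct route uses only three facts:
\begin{itemize}
\item $\Omega\times\mathcal{D}\subset Q$;
\item $u=0$ a.e.\ on $\mathcal{D}\subset U^c$;
\item $\mathcal{D}$ has positive measure.
\end{itemize}
From these, the Gagliardo part of $\eta$ alone gives
\[
[u]_s^2\ \ge\ \int_\Omega |u(x)|^2\int_{\mathcal{D}'}\frac{dy}{|x-y|^{n+2s}}\,dx\ \ge\ |\mathcal{D}'|\,R^{-n-2s}\,\|u\|^2_{L^2(\Omega)} .
\]
This is strictly more elementary. The constant is explicit, no compactness or boundary regularity is needed, and the gradient term plays no role. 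It also makes clear that the constant depends on where $\mathcal{D}$ sits, not only on $\Omega,n,s$ as the statement claims. The compactness part of your write-up is therefore superfluous, as you note.

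\textbf{Two minor inaccuracies} (neither affects the proof):
\begin{itemize}
\item From $\overline{\mathcal{D}\cup\mathcal{N}}=\mathbb{R}^n\setminus\Omega$ and $\Omega\cup\mathcal{N}\subset B_{R_0}$ you only get $\overline{\mathcal{D}}\supset\mathbb{R}^n\setminus\overline{B_{R_0}}$. That is, $\mathcal{D}$ is dense outside $B_{R_0}$; it need not contain that set. Density already makes the open set $\mathcal{D}$ nonempty, hence of positive measure, which is all you use.
\item Your closing example is wrong. In Fig.~\ref{fig:disconnected} the disk $\Omega$ meets $\overline{\mathcal{N}}$ at a single point and is otherwise bordered by $\mathcal{D}$. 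A configuration where $\partial\Omega$ touches only $\mathcal{N}$ would be a ball $\Omega$ enclosed by an annular $\mathcal{N}$.
\end{itemize}
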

As a consequence of Proposition \ref{Poin}, $\eta(\cdot)$ forms a norm on $\mathcal{X}^{1,2}_{\mathcal{D}}(U)$ and $\mathcal{X}^{1,2}_{\mathcal{D}}(U)$ is a Hilbert space with the  inner product associated with $\eta(\cdot)$, defined for any $u,v\in \mathcal{X}^{1,2}_{\mathcal{D}}(U)$  by
$$\langle{ u},{ v}\rangle = \int_{\Omega} \nabla u. \nabla{v} \,dx + \int_{Q} {\dfrac{(u(x)-u(y)) (v(x)-v(y))}{|x-y|^{n+2s}}} ~ dx dy. $$
Consequently, we have the integration by-parts formula given in the following proposition. To proof, see \cite{GiacomoniMukherjeeSharma}.
 \begin{proposition}\label{P}
 For every $ u,v\in  C^\infty_0(U)$, it holds
\begin{align*}
    \int_{\Omega}v \mathcal{L} u \,dx  
    &= \int_{\Omega} \nabla u \cdot\nabla{v} \,dx +  \int_{Q} {\dfrac{(u(x)-u(y)) (v(x)-v(y))}{|x-y|^{n+2s}}} ~ dx dy\\
    &-  \int_{\partial \Omega\cap\overline{{\mathcal{N}}}} v {\frac{\partial u}{\partial \nu}}~ d{\sigma}-  \int_{{\mathcal{N}}} v {\mathcal{N}}_s u~ dx.
    \end{align*}
    where $\nu$ denotes the outward normal on $\partial\Omega$.
    \end{proposition}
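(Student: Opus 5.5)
The plan is to integrate by parts in each operator separately and then add the two identities. Take $u,v\in C^\infty_0(U)$ and write $\mathcal{L}u=-\Delta u+(-\Delta)^s u$, so that
\[
\int_\Omega v\,\mathcal{L}u\,dx=-\int_\Omega v\,\Delta u\,dx+\int_\Omega v\,(-\Delta)^s u\,dx .
\]

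For the local term we apply the classical divergence theorem on $\Omega$, whose boundary is $C^1$. This gives
\[
-\int_\Omega v\Delta u\,dx=\int_\Omega\nabla u\cdot\nabla v\,dx-\int_{\partial\Omega}v\,\frac{\partial u}{\partial\nu}\,d\sigma .
\]
Next we cut the boundary integral down to $\partial\Omega\cap\overline{\mathcal N}$. Since $\operatorname{supp}v$ is a compact subset of $U=\Omega\cup\mathcal N\cup(\partial\Omega\cap\overline{\mathcal N})$, the function $v$ vanishes on $\partial\Omega\setminus\overline{\mathcal N}$. That part of the boundary lies in $\overline{\mathcal D}\setminus\overline{\mathcal N}$, which is disjoint from $U$. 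So only the integral over $\partial\Omega\cap\overline{\mathcal N}$ survives.

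For the nonlocal term we use the standard nonlocal Green formula of Dipierro--Ros-Oton--Valdinoci, with $\mathcal N_s u(x)=c_{n,s}\int_\Omega\frac{u(x)-u(y)}{|x-y|^{n+2s}}\,dy$ for $x\in\mathbb R^n\setminus\overline\Omega$. The constant $c_{n,s}$ is normalized to match the kernel in $[u]_s$. The steps are as follows.
\begin{enumerate}
\item Use the symmetry of the kernel to write
\[
\frac{1}{2}\int_{Q}\frac{(u(x)-u(y))(v(x)-v(y))}{|x-y|^{n+2s}}\,dx\,dy=\int_{\Omega}v\,(-\Delta)^su\,dx+\int_{\Omega^c}v\,\mathcal N_su\,dx .
\]
\item Split $Q=(\Omega\times\Omega)\cup(\Omega\times\Omega^c)\cup(\Omega^c\times\Omega)$.
\item Expand $(u(x)-u(y))(v(x)-v(y))$ and swap $x\leftrightarrow y$. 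This collects the terms of the form $v(x)(u(x)-u(y))$ with $x\in\Omega$, $y\in\mathbb R^n$, and those with $x\in\Omega^c$, $y\in\Omega$.
\item Identify these two groups with $(-\Delta)^su$ on $\Omega$ and with $\mathcal N_su$ on $\Omega^c$, respectively.
\end{enumerate}
The factor $\tfrac12$ is absorbed into the paper's normalization convention, following \cite{GiacomoniMukherjeeSharma}. Since $v=0$ on $\mathcal D$, and $\partial\Omega$ and $\partial\mathcal N$ have measure zero, the integral over $\Omega^c$ reduces to one over $\mathcal N$.

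Adding the local and nonlocal identities and rearranging gives the stated formula.

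The main obstacle is justifying the nonlocal Fubini and symmetrization step. The integrand $(u(x)-u(y))v(x)/|x-y|^{n+2s}$ is not absolutely integrable near the diagonal when taken separately. To handle this, we first work with the truncated kernel $\chi_{\{|x-y|>\varepsilon\}}$, where all the splittings are legitimate. We then let $\varepsilon\to0$. On the double-integral side we use dominated convergence with the bound $|u(x)-u(y)||v(x)-v(y)|\le C|x-y|^2$. On the operator side we use the principal-value definition of $(-\Delta)^s u$ for smooth $u$.

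A second point of care is the boundary term $\mathcal N_s u(x)$ as $x$ approaches $\partial\Omega$ from $\mathcal N$, where it may blow up like $\operatorname{dist}(x,\partial\Omega)^{1-2s}$ when $s\ge \tfrac12$. Its integrability against $v$ over $\mathcal N$ follows from the same $\varepsilon$-truncation limit, because the left-hand side of the identity is finite.
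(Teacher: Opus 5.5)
Your proposal is correct and is the standard argument behind this statement; the paper itself gives no proof and simply cites \cite{GiacomoniMukherjeeSharma}. The argument combines the classical divergence theorem on $\Omega$ with the Dipierro--Ros-Oton--Valdinoci nonlocal Green formula, and uses the support of $v$ to discard the parts of $\partial\Omega$ and $\Omega^c$ lying outside $\partial\Omega\cap\overline{\mathcal N}$ and $\mathcal N$ (up to the normalization constant, as you note).

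The one step to tighten is the last. Finiteness of the left-hand side does not by itself give $v\,\mathcal N_s u\in L^1(\mathcal N)$, nor does it justify passing to $\varepsilon\to0$ in that term. Use instead the $\varepsilon$-uniform bound $C\|\nabla u\|_{\infty}|v(x)|\int_\Omega|x-y|^{1-n-2s}\,dy$. This bound is integrable because it grows at most like $\operatorname{dist}(x,\partial\Omega)^{1-2s}$ (logarithmically if $s=\tfrac12$), and $1-2s>-1$.
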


Consequently to $\mathcal{X}^{1,2}_{\mathcal{D}}(U)\hookrightarrow H^1(\mathbb R^n)$ and Sobolev embeddings, we infer the following embedding result:

\begin{remark}\label{r2.5}
   For $U$ is bounded (since $\Omega\cup\mathcal{N}$ is bounded) with smooth boundary, then we have compact embedding
    $$\mathcal{X}^{1,2}_{\mathcal{D}}(U)\hookrightarrow \hookrightarrow L^q_{loc}(\mathbb{R}^n)$$
   for $q\in [1,2^*)$ and continuous embedding for $q\in[1, 2^*].$
\end{remark}
Let $f : [0,+\infty) \to [0,+\infty)$ be a continuous function satisfying $f(0)=0$. Suppose there exists a number $a > 0$ such that the function
\[
h(\xi) := \frac{F(\xi)}{\xi^{2}}, \quad \xi > 0,
\]
where $F(\xi) := \int_{0}^{\xi} f(t)\,dt$, is nonincreasing in the interval $(0,a]$.
Define
\[
\Theta := \sup\bigl\{ \eta > 0 : h \text{ is nonincreasing in } (0,\eta] \bigr\}
\in (0,+\infty].
\]
If $\Theta = +\infty$, then every positive solution 
$u \in \mathcal{X}^{1,2}_{\mathcal{D}}(U)$ is a solution of problem \eqref{1}.
If instead $\Theta < +\infty$, then any positive function 
$u \in \mathcal{X}^{1,2}_{\mathcal{D}}(U)$ is a weak solution of \eqref{1}.

Now, let
\begin{equation}\label{lambda}
\lambda_1(\mathcal{D})
=
\inf_{u \in \mathcal{X}^{1,2}_{\mathcal{D}}(U)\setminus\{0\}}
\frac{
\displaystyle
\int_{\Omega} |\nabla u|^2 \, dx
+
\int_{Q} \frac{|u(x)-u(y)|^2}{|x-y|^{n+2s}} \, dx \, dy
}{
\displaystyle
\int_{\Omega} |u|^2 \, dx
}.
\end{equation}
be the first positive eigenvalue of the linear problem

\begin{equation*}
\left\{
\begin{aligned}
\mathcal{L}u &= \lambda u, \quad u > 0 \quad \text{in } \Omega, \\
u &= 0 \quad \text{in } U^c, \\
\mathcal{N}_s(u) &= 0 \quad \text{in } \mathcal{N}, \\
\frac{\partial u}{\partial \nu} &= 0 \quad \text{in } \partial \Omega \cap \overline{\mathcal{N}},
\end{aligned}
\right.
\tag{$Q_\lambda$}
\end{equation*}
with corresponding eigenfunction $\varphi_s \in \mathcal{X}^{1,2}_{\mathcal{D}}(U)$, see \cite{GiacomoniMukherjeeSharma}.
\vspace{0.2cm}

We now define the notion of weak solution to \eqref{1}.
\begin{definition}\label{d1}
We say that a positive function $u\in \mathcal{X}^{1,2}_{\mathcal{D}}(U)$ is a weak solution to  \eqref{1} if 
\begin{equation}\label{dd2.2}
    \int_{\Omega} \nabla u\cdot\nabla \varphi \,dx +\int_{Q} \frac{(u(x)-u(y))(\varphi(x)-\varphi(y))}{|x-y|^{n+2s}} dxdy  = \lambda\int_{\Omega} f(u) \varphi\,dx,
\end{equation}
for all $\varphi \in \mathcal{X}^{1,2}_{\mathcal{D}}(U).$
\end{definition}
Now, the energy functional associated to the problem $\eqref{1}$
is
\[
 J_{\lambda}(u) = \frac{1}{2} \eta(u)^2
- \lambda \int_\Omega F(u(x)) dx,
\]
which is well defined for all $u\in~ \mathcal{X}^{1,2}_{\mathcal{D}}(U)$, where
\[
F(\xi) := \int_{0}^{\xi} f(t)\,dt, ~~\xi>0
\]

\vspace{0.2cm}
 Our main result is as follows.

\begin{theorem}\label{mainth}
Let $f : [0,+\infty) \to [0,+\infty)$ be a continuous function with $f(0)=0$.
Assume that there exists $a>0$ such that the function
\[
h(\xi) := \frac{F(\xi)}{\xi^{2}}, \quad \xi>0,
\]
where
\[
F(\xi) := \int_{0}^{\xi} f(t)\,dt,
\]
is nonincreasing on the interval $(0,a]$.
Then the following statements are equivalent:
\begin{itemize}
\item[$\mathbf{(h_1)}$]
The function $h$ is not constant on any interval $(0,\sigma]$ for any $\sigma>0$.

\item [$\mathbf{(h_2)}$] If $\lim_{\xi \to 0^{+}} h(\xi) < +\infty$, then for every $r>0$ there exists $\varepsilon_r>0$ such that, for all
\[
\lambda \in 
\Bigl(
\frac{\lambda_{1}(\mathcal{D})}{2\lim_{\xi \to 0^{+}} h(\xi)},
\;
\frac{\lambda_{1}(\mathcal{D})}{2\lim_{\xi \to 0^{+}} h(\xi)} + \varepsilon_r
\Bigr),
\]
problem \eqref{1} admits a weak solution
\[
u_\lambda \in \mathcal{X}^{1,2}_{\mathcal{D}}(U)
\quad \text{satisfying} \quad
\eta(u_\lambda) < r.
\]

\item[$\mathbf{(h_2^*)}$]
The function $f$ is subcritical, $\lim_{\xi \to 0^{+}} h(\xi) > 0$, and for every $r>0$ there exists an open interval $J_r \subset (0,+\infty)$ such that, for each $\lambda \in J_r$, problem \eqref{1} admits a weak solution
\[
u_\lambda \in \mathcal{X}^{1,2}_{\mathcal{D}}(U)
\quad \text{with} \quad
\eta(u_\lambda)< r.
\]
\end{itemize}

Moreover, if  $\lim_{\xi \to 0^{+}} h(\xi) = +\infty$, then the condition $\mathbf{(h_2)}$ reduces to the following form

\noindent

\medskip

\noindent
$\mathbf{(h_2')}$   for every $r>0$ there exists $\varepsilon_r>0$ such that, for all
\[
\lambda \in (0,\varepsilon_r),
\]
problem \eqref{1} admits a weak solution
\[
u_\lambda \in \mathcal{X}^{1,2}_{\mathcal{D}}(U)
\quad \text{with} \quad
\eta(u_\lambda)< r.
\]
\end{theorem}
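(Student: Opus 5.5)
The strategy is to follow Molica Bisci--R\u{a}dulescu and Ricceri. I would prove the cycle $\mathbf{(h_1)}\Rightarrow\mathbf{(h_2)}$, $\mathbf{(h_2)}\Rightarrow\mathbf{(h_2^*)}$ and $\mathbf{(h_2^*)}\Rightarrow\mathbf{(h_1)}$.

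\emph{Setting up $\mathbf{(h_1)}\Rightarrow\mathbf{(h_2)}$.} Since only the behaviour of $f$ on $[0,a]$ matters for small solutions, I first truncate. Set $\tilde f(t)=f(t)$ for $0\le t\le a$, $\tilde f(t)=f(a)$ for $t>a$, and $\tilde f(t)=0$ for $t<0$. Then $\tilde f$ is bounded, hence subcritical, and the functional $\tilde J_\lambda(u)=\frac12\eta(u)^2-\lambda\int_\Omega\tilde F(u)\,dx$ is $C^1$ on $\mathcal{X}^{1,2}_{\mathcal{D}}(U)$. By Remark \ref{r2.5}, $\Psi(u)=\int_\Omega \tilde F(u)$ is sequentially weakly continuous.

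\emph{The variational step.} For $r>0$ I would apply Ricceri's theorem in the form used in \cite{MolicaBisciRadulescu2017} to $\eta^2$ and $\Psi$ on the closed ball $B_r$. It yields a nonzero critical point $u_\lambda\in B_r$ of $\tilde J_\lambda$ for all $\lambda$ in an interval $\bigl(\lambda_1(\mathcal D)/(2\ell),\,\lambda_1(\mathcal D)/(2\ell)+\varepsilon_r\bigr)$, where $\ell=\lim_{\xi\to0^+}h(\xi)$. Its hypothesis reduces to the strict inequality
\[
\sup_{\eta(u)^2<\rho}\Psi(u)<\ell\,\frac{\rho}{\lambda_1(\mathcal D)}\quad\text{for small }\rho,
\]
together with $\limsup_{\rho\to0}\sup_{B_\rho}\Psi/\rho=\ell/\lambda_1(\mathcal D)$. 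The second property follows from the first eigenfunction $\varphi_s$ of $(Q_\lambda)$: testing with $t\varphi_s$, $t\to0$, and using $F(\xi)\le \ell\xi^2$ (monotonicity of $h$) together with Proposition \ref{Poin}. For the strict inequality I use $\mathbf{(h_1)}$. If equality held along a sequence, I extract maximizers, which are eigenfunction-like. Since $F(\xi)<\ell\xi^2$ on a set of positive measure whenever $h$ is not constant near $0$, and the $L^\infty$ bounds make $u$ small, we get $\int F(u)<\ell\|u\|_2^2\le \ell\eta(u)^2/\lambda_1(\mathcal D)$. Here I would invoke the regularity result \cite[Theorem 3.7]{MR4319042} and the a priori $L^\infty$ estimate to ensure $\|u_\lambda\|_\infty\le a$ for $r$ small, so $u_\lambda$ solves the untruncated problem. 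Then I test with $u_\lambda^-$ and apply the strong maximum principle of \cite{GiacomoniMukherjeeSharma, mukherjee2023nonlocal} to get $u_\lambda>0$. When $\ell=+\infty$, the same argument gives $\mathbf{(h_2')}$, since the left endpoint becomes $0$.

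\emph{$\mathbf{(h_2)}\Rightarrow\mathbf{(h_2^*)}$ and $\mathbf{(h_2^*)}\Rightarrow\mathbf{(h_1)}$.} The first implication is immediate after truncation: take $J_r$ to be the interval above, and note that $\ell>0$ because otherwise $h\le0$ near $0$, which forces $F\equiv0$ and the solutions to be trivial. For the second, argue by contradiction. Suppose $h\equiv c$ on $(0,\sigma]$. Then $f(\xi)=2c\xi$ there and $c=\ell>0$. For a solution with $\eta(u_\lambda)<r$, $r$ small, the regularity estimate gives $\|u_\lambda\|_\infty<\sigma$, so $\mathcal L u_\lambda=2c\lambda u_\lambda$ with $u_\lambda>0$. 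By uniqueness of the positive eigenvalue, $2c\lambda=\lambda_1(\mathcal D)$, so only a single $\lambda$ is admissible, contradicting that $J_r$ is an open interval.

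\emph{Main obstacle.} The delicate step is the strict inequality in Ricceri's condition. It needs the uniform $L^\infty$ smallness of low-energy functions, or of the corresponding maximizers, for the mixed operator with mixed boundary conditions. I would derive it from a Moser/De Giorgi iteration via \cite[Theorem 3.7]{MR4319042}, using the boundedness of $f$ on $[0,a]$.
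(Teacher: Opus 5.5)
\textbf{Gap in the closing step $\mathbf{(h_2^*)}\Rightarrow\mathbf{(h_1)}$.} This step is not justified as written. Any $L^\infty$ bound you can invoke depends on $\lambda$. In the form the paper uses, $\|u\|_{L^\infty(\Omega)}\le C_q\|\mathcal{G}\|_{L^q(\Omega)}$ with $\mathcal{G}=\lambda f(u)$, the factor $\lambda$ is explicit, and a Moser/De Giorgi bootstrap has the same dependence. But $\mathbf{(h_2^*)}$ says nothing about where the open interval $J_r$ lies: nothing in the hypothesis excludes $J_r\subset(\Lambda_r,+\infty)$ with $\Lambda_r\to+\infty$ as $r\to0$. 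So the claim that $\eta(u_\lambda)<r$ with $r$ small forces $\|u_\lambda\|_\infty<\sigma$ does not follow, and you never reach the linear equation $\mathcal{L}u_\lambda=2c\lambda u_\lambda$.

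The paper avoids this by proving $\mathbf{(h_2)}\Rightarrow\mathbf{(h_1)}$ instead. There $\lambda$ stays in $(\mu_0,\mu_0+\varepsilon_j)$, so the constant $k$ in its $L^\infty$ estimate is uniform in $j$ and $\lambda$. The paper never treats $\mathbf{(h_2^*)}$ at all.

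To close your cycle you need an a priori bound on $\lambda$ for positive solutions of small energy. One way is to test with $\varphi_s^2/(u_\lambda+\epsilon)$ and use the local and discrete Picone inequalities. Letting $\epsilon\to0$ gives $\lambda_1(\mathcal{D})\int_\Omega\varphi_s^2\ge 2c\lambda\int_{\{u_\lambda\le\sigma\}}\varphi_s^2$. Since $|\{u_\lambda>\sigma\}|\le Cr^2/\sigma^2$, this yields $\lambda\le\lambda_1(\mathcal{D})/c$ for $r$ small.

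Relatedly, in $\mathbf{(h_2)}\Rightarrow\mathbf{(h_2^*)}$, subcriticality of $f$ is a property of $f$ itself. It cannot be obtained after truncation, so it must be read as a standing assumption.

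\textbf{Your route to $\mathbf{(h_1)}\Rightarrow\mathbf{(h_2)}$.} This differs from the paper's and is sound once the black box is replaced. Ricceri's theorem (a global maximum $\hat u$ of $\Psi$ on the ball with $\Psi'(\hat u)(\hat u)<2\Psi(\hat u)$) only yields \emph{some} open parameter interval, which is an $\mathbf{(h_2^*)}$-type conclusion. It does not give the left endpoint $\lambda_1(\mathcal{D})/(2\ell)$.

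However, your two conditions do give $\mathbf{(h_2)}$ by a direct argument:
\begin{itemize}
\item Minimize $\tilde J_\lambda$ on $\{\eta(u)\le r\}$.
\item Testing with $t\varphi_s$ makes the minimum negative for every $\lambda>\lambda_1(\mathcal{D})/(2\ell)$. Use monotone convergence: $\tilde h(t\varphi_s)\uparrow\ell$ as $t\to0^+$.
\item On the sphere, $\tilde J_\lambda\ge r^2/2-\lambda\sup_{\eta\le r}\Psi$. This is positive at $\lambda=\lambda_1(\mathcal{D})/(2\ell)$ by the strict inequality, hence on a right neighbourhood of that value.
\item So the minimizer is interior and is a critical point with $\eta(u_\lambda)<r$.
\end{itemize}

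The strict inequality is not the delicate step. For $\xi\ge a$, $\xi^3\tilde h'(\xi)=2af(a)-2F(a)-\xi f(a)\le af(a)-2F(a)\le0$, so $\tilde h$ is nonincreasing on $(0,\infty)$. Then $\mathbf{(h_1)}$ forces $\tilde h<\ell$ everywhere, and hence $\Psi(u)<\ell\|u^+\|_2^2\le\ell\,\eta(u)^2/\lambda_1(\mathcal{D})$ whenever $u^+\not\equiv0$ in $\Omega$. No $L^\infty$ information is needed there. The $L^\infty$ estimate is needed only to undo the truncation, and for that the boundedness of $\tilde f$ together with $\lambda\le\lambda_1(\mathcal{D})/(2\ell)+1$ suffices.

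Compared with the paper, your localized minimization gives the energy bound directly. The paper flattens $h$ beyond $\Theta$ and globally minimizes a coercive functional. It then runs a weak-limit argument showing $\eta(u_\lambda)\to0$ as $\lambda\to\mu_0^+$, followed by interpolation and the $L^\infty$ bound. Your route dispenses with $\Theta$, coercivity and the asymptotic analysis. Both arguments use $\mathbf{(h_1)}$ at the same point: to make the Rayleigh-quotient inequality strict.
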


\noindent

We first establish a basic result on the weak continuity of the nonlinear term, which will be used in the variational framework.

 \begin{lemma}\label{lem:weak-continuity-F}
Assume that $F:\mathbb{R}\to\mathbb{R}$ is continuous and satisfies the growth
condition
\begin{equation}\label{eq:growth}
|F(t)| \le \theta t^{2} + t_0 \quad \text{for all } t \in \mathbb{R},
\end{equation}
for some constants $\theta>0$ and $t_0> 0$.
Then the mapping
\[
u \longmapsto \int_{\Omega} F(u(x))\,dx
\]
is well defined and weakly continuous from
$\mathcal{X}^{1,2}_{\mathcal{D}}(U)$,
endowed with the weak topology, into $\mathbb{R}$.
Moreover, the functional
\[
J_{\lambda}(u) = \frac{1}{2} \eta(u)^2
- \lambda \int_\Omega F(u(x)) dx,
\]
well defined for all $u\in~ \mathcal{X}^{1,2}_{\mathcal{D}}(U)$
is weakly lower semicontinuous in
$\mathcal{X}^{1,2}_{\mathcal{D}}(U)$.
\end{lemma}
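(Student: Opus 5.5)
Well-definedness comes first. For $u\in\mathcal{X}^{1,2}_{\mathcal{D}}(U)$ the function $x\mapsto F(u(x))$ is measurable, since $F$ is continuous, and by \eqref{eq:growth} we have $|F(u)|\le \theta u^2+t_0$ on $\Omega$. Because $\Omega$ is bounded and $u\in L^2(\Omega)$ by Proposition~\ref{Poin}, the right-hand side is integrable. Hence $\int_\Omega F(u)\,dx$ is finite.

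For weak continuity we take a sequence $u_k\rightharpoonup u$ weakly in $\mathcal{X}^{1,2}_{\mathcal{D}}(U)$. The space is a Hilbert space, so bounded sets are weakly metrizable on bounded subsets and it suffices to argue with sequences. It is also enough to show that every subsequence has a further subsequence along which $\int_\Omega F(u_k)\to\int_\Omega F(u)$. Weakly convergent sequences are bounded in $\eta$, so by the compact embedding of Remark~\ref{r2.5} with $q=2$ we get $u_k\to u$ strongly in $L^2(\Omega)$, since $\Omega$ is bounded. Passing to a subsequence, we may assume $u_k\to u$ a.e. in $\Omega$ and $|u_k|\le g$ for some $g\in L^2(\Omega)$; this is the partial converse of dominated convergence (Riesz--Fischer). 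Then continuity of $F$ gives $F(u_k)\to F(u)$ a.e. We also have the uniform domination $|F(u_k)|\le \theta g^2+t_0\in L^1(\Omega)$, so dominated convergence yields the claim. An alternative route is the generalized dominated convergence theorem with majorants $\theta u_k^2+t_0$, which converge in $L^1$.

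For the weak lower semicontinuity of $J_\lambda$, note that $\frac12\eta(u)^2$ is the square of the Hilbert norm. It is therefore convex and continuous, hence weakly lower semicontinuous; concretely, $\eta(u)\le\liminf\eta(u_k)$. The term $-\lambda\int_\Omega F(u)$ is weakly continuous by the previous step, and adding a weakly continuous functional preserves weak lower semicontinuity.

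The main technical point is the second step: one must ensure the compact embedding applies on $\Omega$ itself. Remark~\ref{r2.5} gives $L^q_{loc}$ compactness, and $\Omega$ is bounded, so this is fine. One must also extract a single $L^2$ dominating function $g$; this is handled by the subsequence argument combined with the subsequence principle.
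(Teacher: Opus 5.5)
Your argument is correct and follows essentially the paper's proof: compact embedding into $L^2(\Omega)$, an a.e.\ convergent subsequence with an $L^2$ majorant $g$, dominated convergence via $|F(u_k)|\le\theta g^2+t_0$, and convexity of $\eta^2$ for the weak lower semicontinuity of $J_\lambda$. You are more explicit than the paper, which takes $g\in L^\nu_{loc}$ without stating that $\nu\ge 2$ is needed and does not invoke the subsequence principle, but note one caveat. Both you and the paper prove \emph{sequential} weak continuity, which is the intended meaning; your remark about weak metrizability of bounded sets does not extend this to continuity for the weak topology on the whole space, and that stronger statement already fails for $F(t)=t^2$.
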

\begin{proof}
    Consequently, the map \[
u \longmapsto \int_{\Omega} F(u(x))\,dx
\]
is well defined. 
To prove weak continuity of this map,  
let \(\{u_j\}_{j\in\mathbb{N}}\) be a sequence in
\(\mathcal{X}^{1,2}_{\mathcal{D}}(U)\) such that
\[
u_j \rightharpoonup u
\quad \text{weakly in }
\mathcal{X}^{1,2}_{\mathcal{D}}(U).
\]
By Sobolev embedding result $\mathcal{X}^{1,2}_{\mathcal{D}}(U)\hookrightarrow  L^{\nu}_{loc}(\mathbb{R}^n)$, up to a subsequence,
\(\{u_j\}_{j\in\mathbb{N}}\) converges strongly to \(u\) in \(L^\nu_{loc}(\mathbb{R}^n)\) and
a.e. in \(\mathbb{R}^n\) as \(j \to +\infty\).
Moreover, there exists a function \(g \in L^\nu_{loc}(\mathbb{R}^n)\) such that
\begin{equation} \label{eq:3.5}
|u_j(x)| \leq g(x)
\quad \text{a.e. } x \in \mathbb{R}^n,
\end{equation}
for all \(j \in \mathbb{N}\) and any \(\nu \in [1,2^*_s)\).

By the continuity of \(F\) and growth condition \eqref{eq:growth}, it follows that
\[
F(u_j(x)) \to F(u(x))
\quad \text{a.e. } x \in \Omega
\]
and
\[
|F(u_j(x))|
\leq \theta u_j(x)^2 + t
\leq \theta g(x)^2 + t
\in L^1(\Omega),
\]
for almost every \(x \in \Omega\) and all \(j \in \mathbb{N}\).
Hence, by the Lebesgue Dominated Convergence Theorem,
\[
\int_{\Omega} F(u_j(x))\,dx
\to
\int_{\Omega} F(u(x))\,dx
\quad \text{as } j \to \infty.
\]
Therefore, the mapping
\[
w \mapsto \int_{\Omega} F(u(x))\,dx
\]
is continuous from
\(\mathcal{X}^{1,2}_{\mathcal{D}}(U)\),
endowed with the weak topology, into \(\mathbb{R}\).

On the other hand, the mapping
\[
u \mapsto
\eta(u)^2
\]
is convex; hence it is weakly lower semicontinuous.
Moreover,  functional \(J_\lambda\) is of class
\(C^1\) and weakly lower semicontinuous in
\(\mathcal{X}^{1,2}_{\mathcal{D}}(U)\).
\end{proof}
\vspace{0.2cm}
\section{Proof of Theorem~\ref{mainth}}

In this section, in order to prove of Theorem \ref{mainth}, first we establish the equivalence relation between conditions $\mathbf{(h_1)}$ and $\mathbf{(h_2)}$. To this end, we prove separately that
\[
\mathbf{(h_1)} \Rightarrow \mathbf{(h_2)}
\quad \text{and} \quad
\mathbf{(h_2)} \Rightarrow \mathbf{(h_1)}.
\]
The argument is naturally divided into two parts.

\subsection{\texorpdfstring{Part I: Proof of $\mathbf{(h_1)} \Rightarrow \mathbf{(h_2)}$}{Part I: Proof of (h1) implies (h2)}}\label{sec:part1}

We consider two cases depending on the value of $\Theta$:
\begin{enumerate}
\item[(i)] $\Theta = +\infty$;
\item[(ii)] $\Theta < +\infty$.
\end{enumerate}
\medskip

\noindent
\textbf{Case (i):} $\Theta = +\infty$.  

Assume that the function \(h\) is nonincreasing on  \((0,+\infty)\). Define
\[
\lim_{\xi \to 0^+} h(\xi) = t_1 \in (0,+\infty],
\]
and
\[
\lim_{\xi \to +\infty} h(\xi) = t_2.
\]
Clearly, \(t_2 \geq 0\) and, since assumption $\mathbf{(h_1)}$ holds, we have \(t_1 > t_2\). Hence,
\[
I := \left[ \frac{\lambda_{1}(\mathcal{D})}{2t_1}, \frac{\lambda_{1}(\mathcal{D})}{2t_2} \right] \neq \emptyset,
\]
where \(\lambda_{1}(\mathcal{D})\) is the first eigenvalue of \(\mathcal{L}\).

To this end, we first extend the function \(f\) to the whole real line by setting
\(f(t)=0\) for all \(t \in (-\infty,0)\).
Fix \(\lambda \in I\) and define the energy functional
\[
J_{\lambda}(u)
= \frac{1}{2} \eta(u)^2
- \lambda \int_{\Omega} F(u(x)) \, dx, ~~\forall~u \in \mathcal{X}^{1,2}_{\mathcal{D}}(U).
\]
The nontrivial critical points of \(J_\lambda\) coincide with the nontrivial weak solutions of problem \eqref{1}. Moreover, by using the maximum principle, see \cite{GiacomoniMukherjeeSharma}, if
\(u \in \mathcal{X}^{1,2}_{\mathcal{D}}(U)\) is a weak solution of \eqref{1}, 
is strictly positive a.e. in $U$ and solves problem \eqref{1}.

Since \(h\) is nonincreasing on \((0,+\infty)\), it follows that \(f\) has sublinear growth at \(+\infty\).
Indeed, by the definition of \(t_2\), there exist constants
\(\theta > t_2\) and \(t > 0\) such that
\begin{equation} \label{eq:3.2}
F(\xi) \leq \theta \xi^2 + t,~~ \forall~~\xi \in \mathbb{R}
\end{equation}
Using \eqref{eq:3.2} and the inequality
\(\xi f(\xi) \leq 2F(\xi)\) for all \(\xi \in \mathbb{R}\)
(which follows from the monotonicity of \(h\) and the fact that
\(f \equiv 0\) on \((-\infty,0)\)),
we obtain
\[
\xi f(\xi) \leq 2\theta \xi^2 + 2t,
\quad \forall~ \xi \geq 0.
\]
Hence,
\begin{equation} \label{eq:3.3}
f(\xi) \leq 2\theta |\xi| + \frac{2t}{|\xi|},
\quad \forall~ \xi \in \mathbb{R} \setminus \{0\}.
\end{equation}
Fixing \(\xi_0>0\), inequality \eqref{eq:3.3} yields
\[
f(\xi) \leq 2\theta |\xi| + \frac{2t}{\xi_0},
\quad \forall~ |\xi| \geq \xi_0.
\]
Therefore,
\[
f(\xi) \leq 2\theta |\xi| + \gamma,
\quad \forall~ \xi \in \mathbb{R},
\]
where
\[
\gamma := \max_{|\xi|\leq \xi_0} f(\xi) + \frac{2t}{\xi_0}.
\]
Since \(\lambda \in I\), we have
\[
\lambda < \frac{\lambda_{1}(\mathcal{D})}{2t_2}.
\]
Thus, we can choose \(\theta \in (t_1,t_2)\) such that
\begin{equation} \label{eq:3.4}
\frac{\lambda_{1}(\mathcal{D})}{2t_1}
< \lambda <
\frac{\lambda_{1}(\mathcal{D})}{2\theta}.
\end{equation}
Furthermore, from Lemma \ref{lem:weak-continuity-F}, we know that the functional $J_{\lambda}$ is lower semicontinuous in  $\mathcal{X}^{1,2}_{\mathcal{D}}(U).$
By \eqref{eq:3.2}, there exists a constant \(\alpha>0\), independent of \(\lambda\),
for instance \(\alpha :=  \frac{ t\lambda_{1}(\mathcal{D})}{2\theta}\), such that
\[
\lambda \int_{\Omega} F(u(x))\,dx
\leq
\lambda \theta \int_{\Omega} u^2(x)\,dx + \alpha
\leq
\frac{\lambda \theta}{\lambda_{1}(\mathcal{D})}
\eta(u)^2
+ \alpha.
\]
Therefore,
\begin{equation} \label{eq:3.6}
J_\lambda(u)
\geq
\left(\frac{1}{2}
- \frac{\lambda \theta}{\lambda_{1}(\mathcal{D})}\right)
\eta(u)^2
- \alpha, ~~~\forall~u \in \mathcal{X}^{1,2}_{\mathcal{D}}(U).
\end{equation}
In view of \eqref{eq:3.4}, term $\left(\frac{1}{2}-\frac{\lambda \theta}{\lambda_{1}(\mathcal{D})}\right)>0$, and from \eqref{eq:3.6}, we conclude that
\begin{equation} \label{eq:3.7}
J_\lambda(u) \to +\infty
\quad \text{as }
\eta(u) \to +\infty.
\end{equation}
Hence, the functional \(J_\lambda\) is coercive.

Now observe that \(\varphi_{s} \in \mathcal{X}^{1,2}_{\mathcal{D}}(U)\) and
\(\varphi_{s}\) is strictly positive a.e. in $U$.
Set
\[
\bar{\varphi}_s :=
\max_{x \in \overline{\Omega}} \varphi_s(x).
\]
Thanks to assumption $\mathbf{(h_1)}$, since $\varphi_s$ is not constant,
there exists a measurable set $\Omega_0\subset U$ with $|\Omega_0|>0$
such that $\varphi_s(x)<\bar\varphi_s$ for all
$x\in\Omega_0$. Hence,  for every $t>0$,
\[
h(t\varphi_s(x)) > h(t\bar{\varphi}_s),
\quad \forall~~ x \in \Omega_0,
\]
where \(\Omega_0 \subseteq U\) is a measurable subset with positive
Lebesgue measure.

Consequently, we obtain
\begin{align}
J_\lambda(t\varphi_s)
&= \frac{t^2}{2}
\eta(\varphi_s)^2
- \lambda \int_{\Omega}
h(t\varphi_s(x))(t\varphi_s(x))^2\,dx \notag \\
&<
\frac{t^2}{2}
\eta(\varphi_s)^2
- \lambda t^2 h(t\bar{\varphi}_s)
\int_{\Omega} (\varphi_s(x))^2\,dx \notag \\
&=
t^2 \eta(\varphi_s)^2
\left(
\frac{1}{2}
- \frac{\lambda}{\lambda_{1}(\mathcal{D})} h(t\bar{\varphi}_s)
\right).
\label{eq:3.8}
\end{align}
Now, setting \(\sigma := t^2 \bar{\varphi}_s\), we have
\[
\lim_{\sigma \to 0^+} h(\sigma)
=
\lim_{\sigma \to 0^+} \frac{F(\sigma)}{\sigma^2}
= t_1.
\]
If \(t_1 < +\infty\), then for every \(\varepsilon>0\) there exists
\(\delta_\varepsilon>0\) such that
\[
\frac{F(\sigma)}{\sigma^2} < t_1 + \varepsilon,
\quad \forall~ \sigma \in (0,\delta_\varepsilon].
\]
Since
\[
\frac{\lambda_{1}(\mathcal{D})}{2\lambda} < t_1,
\]
there exist \(\bar{\varepsilon}>0\) and \(\delta_{\bar{\varepsilon}}>0\) such that
\[
\frac{\lambda_{1}(\mathcal{D})}{2\lambda}
< t_1 - \bar{\varepsilon} < t_1,
\]
and
\[
\frac{F(\sigma)}{\sigma^2}
> t_1 - \bar{\varepsilon}
> \frac{\lambda_{1}(\mathcal{D})}{2\lambda},
\quad \forall~ \sigma \in (0,\delta_{\bar{\varepsilon}}].
\]
If \(t_1 = +\infty\), then
\[
\frac{F(\sigma)}{\sigma^2}
> \frac{\lambda_{1}(\mathcal{D})}{2\lambda}
\]
for \(\sigma\) sufficiently small.

Hence, in both cases, there exists \(\bar{t}>0\) such that
\[
h(\bar{t}\bar{\varphi}_s)
=
\frac{F(\bar{t}^2\bar{\varphi}_s)}{\bar{t}^2\bar{\varphi}_s}
>
\frac{\lambda_{1}(\mathcal{D})}{2\lambda}.
\]
Therefore,
\[
\inf_{u \in \mathcal{X}^{1,2}_{\mathcal{D}}(U)}
J_\lambda(u) < 0.
\]
Together with the weak lower semicontinuity of \(J_\lambda\)
and the coercivity property \eqref{eq:3.7}, this implies the existence of
a nontrivial global minimizer
\(u_\lambda \in \mathcal{X}^{1,2}_{\mathcal{D}}(U)\)
of the functional \(J_\lambda\).

As observed above, \(u_\lambda\) is a weak solution of problem \eqref{1}.
We now claim that
\begin{equation} \label{eq:3.9}
\lim_{\lambda \to \mu_0^+}
\eta(u_\lambda) = 0,
\end{equation}
where $\mu_0$ is defined by
\[
\mu_0 =
\begin{cases}
\dfrac{\lambda_{1}(\mathcal{D})}{2t_1}, & \text{if } t_1<+\infty,\\[6pt]
0, & \text{if } t_1=+\infty.
\end{cases}
\]
Now, to proving \eqref{eq:3.9}, let \(\{\lambda_j\}_{j\in\mathbb{N}} \subset
\left]\frac{\lambda_{1}(\mathcal{D})}{2t_1},
\frac{\lambda_{1}(\mathcal{D})}{2\theta}\right[\)
be a sequence such that
\[
\lim_{j\to\infty} \lambda_j = \frac{\lambda_{1}(\mathcal{D})}{2t_1}.
\]
It is easy to obseve that 
\(J_{\lambda_j}(u_{\lambda_j}) < 0\), for every \(j \in \mathbb{N}\).
Thus, from \eqref{eq:3.6} we obtain
\[
\eta(u_{\lambda_j})^2
<
\frac{\alpha}{\left(\frac{1}{2}
- \frac{\lambda_j\theta}{\lambda_{1}(\mathcal{D})}\right)}.
\]
Since
\[
\lim_{\lambda_j \to \mu_0^+}
\frac{\alpha}{\left(\frac{1}{2}
- \frac{\lambda_j\theta}{\lambda_{1}(\mathcal{D})}\right)}
=
\frac{\alpha}{\left(\frac{1}{2}
- \frac{\theta}{2t_1}\right)},
\]
the sequence \(\{u_{\lambda_j}\}\) is bounded in
\(\mathcal{X}^{1,2}_{\mathcal{D}}(U)\).

Up to a subsequence,
\(u_{\lambda_j} \rightharpoonup u_\infty\) weakly in
\(\mathcal{X}^{1,2}_{\mathcal{D}}(U)\), and
\[
u_{\lambda_j}
\to u_\infty
\quad \text{strongly in } L^l_{loc}(\mathbb{R}^n),
\quad \forall ~l \in [1,2^*_s).
\]

We claim that \(u_\infty = 0\).
Assume by contradiction that \(u_\infty \neq 0\).
For every \(\varphi \in
\mathcal{X}^{1,2}_{\mathcal{D}}(U)\), we have
\begin{align} \label{eq:3.10}
0 &=
 \int_{\Omega}
 \nabla u_{\lambda_j}\cdot \nabla \varphi  \,dx +\int_{Q}\frac{(u_{\lambda_j}(x)- u_{\lambda_j}(y))(\varphi(x)-\varphi(y))}{|x-y|^{n+2s}}\,dx\,dy
- \lambda_j \int_\Omega
f(u_{\lambda_j}(x))\varphi(x)\,dx .
\end{align}
Assume that \(t_1\) is finite (also the case \(t_1 = +\infty\) is similar). Taking into account inequality \eqref{eq:3.3}, and since \(u_j \rightharpoonup u_\infty\) weakly in \(\mathcal{X}^{1,2}_{\mathcal{D}}(U)\) and \(u_{\lambda_j}(x)) \to u_\infty\) strongly in \(L^1_{loc}(\mathbb{R}^n)\), passing to the limit in \eqref{eq:3.10}, we obtain
\begin{align} \label{eq:3.11}
0 &=
 \int_{\Omega}
 \nabla u_\infty\cdot \nabla \varphi  \,dx +\int_{Q}\frac{(u_{\infty}(x)- u_{\infty}(y))(\varphi(x)-\varphi(y))}{|x-y|^{n+2s}}\,dx\,dy
- \frac{\lambda_{1}(\mathcal{D})}{2t_1}
\int_\Omega f(u_\infty(x))\varphi(x)\,dx .
\end{align}
Therefore, \(u_{\infty}\) \(u_{\infty}\) is a weak solution of the problem \((P_{\mu_0})\) and again by the Maximum Principle, see \cite{GiacomoniMukherjeeSharma}, \(u_{\infty} > 0\) a.e. in \(U\).
Testing \eqref{eq:3.11} with \(\varphi = w_\infty\) and using
\(\xi f(\xi) \leq 2F(\xi)\), we deduce
\begin{align} \label{eq:3.12} 0 &= \eta(u_{\infty})^2 - \frac{\lambda_{1}(\mathcal{D})}{2t_1} \int_{\Omega} f(u_{\infty}(x))u_{\infty}(x)\,dx \notag \\ &\geq \eta(u_{\infty})^2 - \frac{\lambda_{1}(\mathcal{D})}{t_1} \int_{\Omega} F(u_{\infty}(x))\,dx \notag \\ &= \eta(u_{\infty})^2 - \frac{\lambda_{1}(\mathcal{D})}{t_1} \int_{\Omega} h(u_{\infty}(x))(u_{\infty}(x))^2 \,dx. \end{align}
Taking into account $\mathbf{(h_1)}$, relation \eqref{eq:3.12} yields \begin{align*} 0 &= \eta(u_{\infty})^2 - \frac{\lambda_{1}(\mathcal{D})}{t_1} \int_{\Omega} h(u_{\infty}(x))(u_{\infty}(x))^2 \,dx \\ &> \eta(u_{\infty})^2 - \lambda_{1}(\mathcal{D}) \int_{\Omega} (u_{\infty}(x))^2 \,dx, \end{align*} 
that implies \[
\lambda_{1}(\mathcal{D})
>
\frac{
\displaystyle \int_{\Omega} |\nabla u_{\infty}(x)|^{2}\,dx
\;+\;
\displaystyle \int_{Q} \frac{|u_{\infty}(x)-u_{\infty}(y)|^{2}}{|x-y|^{n+2s}}\,dx\,dy
}{
\displaystyle \int_{\Omega} |u_{\infty}(x)|^{2}\,dx
}.
\] in contradiction to the definition of \(\lambda_{1}(\mathcal{D})\). Therefore, we must have \(u_{\infty} = 0\).

Finally, choosing \(\varphi = u_{\lambda_j}\) in \eqref{eq:3.11}, we obtain
\[
\eta(u_{\lambda_j})^2
=
\lambda_j \int_\Omega
f(u_{\lambda_j}(x))u_{\lambda_j}(x)\,dx.
\]
Using \eqref{eq:3.3} and the strong convergence of
\((u_{\lambda_j})\) to zero in \(L^2_{loc}(\mathbb{R}^n)\),
we conclude that
\[
\lim_{j\to\infty}
\eta(u_{\lambda_j}) = 0,
\]
which proves \eqref{eq:3.9}.
\noindent

\textbf{Case (ii):} $\Theta < +\infty$.  
Suppose that $\Theta < +\infty$ and note that $h'(\Theta) = 0$. Define the function
\(h_0 : (0, +\infty) \to \mathbb{R}\) by
\[
h_0(\xi) := 
\begin{cases}
h(\xi) & \text{if } \xi \in (0, \Theta], \\
h(\Theta) & \text{if } \xi \in (\Theta, +\infty).
\end{cases}
\]
Since $h\in C^1((0,+\infty))$ and $h'(\Theta )=0$, the function $h_0$ coincides
with $h$ on $(0,\Theta]$ and is constant on $(\Theta,+\infty)$. Hence $h_0$
is continuously differentiable on $(0,+\infty)$.

Next, define
\[
F_0(\xi) := 
\begin{cases}
0 & \text{if } \xi \in (-\infty, 0], \\
h_0(\xi)\xi^2 & \text{if } \xi \in (0, +\infty) .
\end{cases}
\]
It follows that $F_0$ is a $C^1$ function and that $F_0(\xi) = F(\xi)$ for every $\xi \in (-\infty, \Theta]$. Moreover, $F_0$ satisfies condition $\mathbf{(h_1)}$, and the function
\[
\xi \mapsto \frac{F_0(\xi)}{\xi^2}
\]
is nonincreasing in $(0, +\infty)$.

We now consider the problem
\begin{equation} \label{Q_lambda_0}
\left\{
\begin{aligned}
\mathcal{L}u &= \lambda f_0(u), \quad u > 0 \quad \text{in } \Omega, \\
u &= 0 \quad \text{in } U^c, \\
\mathcal{N}_s(u) &= 0 \quad \text{in } \mathcal{N}, \\
\frac{\partial u}{\partial \nu} &= 0 \quad \text{in } \partial \Omega \cap \overline{\mathcal{N}},
\end{aligned}
\right.
\tag{$Q_\lambda^0$}
\end{equation}
\[
f_0(t) := F_0'(t) =
\begin{cases}
0 & \text{if } t \in (-\infty, 0], \\
f(t) & \text{if } t \in (0, \Theta], \\
2h_0(\Theta)t & \text{if } t \in (\Theta, +\infty).
\end{cases}
\]
Since case (i) finds a point 
$\mu_0$ where the desired property holds. Using continuity of the function involved, we know that property also holds in a small open interval 
$(\mu_0,\mu_0+\varepsilon_0), \varepsilon_0 > 0$. Then for any $r > 0$, there exists an open interval
\[
J := (\mu_0, \mu_0 + \varepsilon_0) 
\]
such that, for every $\lambda \in J$, problem \eqref{Q_lambda_0} admits a weak solution
$u_\lambda \in \mathcal{X}^{1,2}_{\mathcal{D}}(U)$ satisfying
\[
\eta(u_\lambda)< r.
\]
Furthermore, condition \eqref{eq:3.9} is satisfied.

Fix $q > \frac{n}{2s}\left(>\frac{n}{2}\right)$. Using [\cite{mukherjee2023nonlocal}, Lemma 3.4], there exists a  constant $C_q>0$ such that, for every $\mathcal{G} \in L^q(\Omega)$ and every solution
$u \in \mathcal{X}^{1,2}_{\mathcal{D}}(U)$ of the problem
\begin{equation} 
\left\{
\begin{aligned}
\mathcal{L}u &= \mathcal{G}(x),  \quad \text{in}~~ \Omega, \\
u &= 0 \quad \text{in } U^c, \\
\mathcal{N}_s(u) &= 0 \quad \text{in } \mathcal{N}, \\
\frac{\partial u}{\partial \nu} &= 0 \quad \text{in } \partial \Omega \cap \overline{\mathcal{N}},
\end{aligned}
\right.
\tag{$Q^{\mathcal{G}}_\lambda$}
\end{equation}
one has $u \in L^\infty(U)$ and
\begin{equation} \label{eq:3.14}
\|u\|_{L^{\infty}(\Omega)} \leq C_q \|\mathcal{G}\|_{L^q(\Omega)}.
\end{equation}
Since the function $f_0$ has sublinear growth at $\infty$ and satisfies $f_0(0) = 0$, there exists a positive constant $\vartheta>0$ and $C>0$ such that
\[
|f_0(t)| \le \vartheta |t| + C, \quad \forall ~~t \in \mathbb{R}.
\]
Choosing
\[
C = (\mu_0 + \varepsilon_0)^{-1} \frac{\Theta}{2 C_q |\Omega|^{1/q}},
\]
then we obtain the estimate
\begin{equation} \label{eq:3.15}
f_0(t) \leq \vartheta |t| + \frac{(\mu_0 + \epsilon_0)^{-1} \Theta}{2C_q |\Omega|^{1/q}}, ~~~\forall~~t \in \mathbb{R}.
\end{equation}
Let $u_\lambda \in \mathcal{X}^{1,2}_{\mathcal{D}}(U)$.
Then, for every
$\varphi \in \mathcal{X}^{1,2}_{\mathcal{D}}(U)$, we have
\[
\int_{\Omega}
 \nabla u_{\lambda}\cdot \nabla \varphi  \,dx +\int_{Q}\frac{(u_{\lambda}(x)- u_{\lambda}(y))(\varphi(x)-\varphi(y))}{|x-y|^{n+2s}}\,dx\,dy
= \lambda \int_{\Omega} f_0(u_\lambda(x)) \varphi(x)\,dx.
\]
Consequently, $u_\lambda \in \mathcal{X}^{1,2}_{\mathcal{D}}(U)$ is a weak solution of
\begin{equation} 
\left\{
\begin{aligned}
\mathcal{L}u &= \lambda f_0(u_\lambda(x)),  \quad \text{in}~~ \Omega, \\
u &= 0 \quad \text{in } U^c, \\
\mathcal{N}_s(u) &= 0 \quad \text{in } \mathcal{N}, \\
\frac{\partial u}{\partial \nu} &= 0 \quad \text{in } \partial \Omega \cap \overline{\mathcal{N}},
\end{aligned}
\right.
\end{equation}
By \eqref{eq:3.14} and \eqref{eq:3.15}, we obtain
\begin{equation} \label{eq:3.17}
\|u_\lambda\|_{L^{\infty}(\Omega)}
\leq \lambda \vartheta C_q \|u_\lambda\|_{L^{q}(\Omega)}
+ \lambda \lambda_0^{-1} \frac{\Theta}{2}
\leq \lambda_0 \vartheta C_q \|u_\lambda\|_{L^{q}(\Omega)} + \frac{\Theta}{2},
\end{equation}
for every $\lambda \in J$.

Fix $0<\mu<1$ such that $q\mu < 2^*$. We now prove that
\begin{equation} \label{eq:3.18}
\|u_\lambda\|_{L^{q}(\Omega)}
\leq \|u_\lambda\|_{L^{\infty}(\Omega)}^{1-\mu}
\|u_\lambda\|_{L^\mu q(\Omega)}^{\mu}.
\end{equation}
Indeed,
\[
\begin{aligned}
\|u_\lambda\|_q
&:= \left( \int_{\Omega} |u_\lambda(x)|^q \, dx \right)^{\frac{1}{q}} \\[1mm]
&= \left( \int_{\Omega}
|u_\lambda(x)|^{q(1-\mu)} |u_\lambda(x)|^{\frac{q}{\mu}} \, dx
\right)^{\frac{\mu}{q\mu}}
\leq \|u_\lambda\|_{L^{\infty}(\Omega)}^{1-\mu}
\|u_\lambda\|_{L^{\mu q}(\Omega)}^{\mu}.
\end{aligned}
\]
By \eqref{eq:3.18} and the Sobolev embedding
$\mathcal{X}^{1,2}_{\mathcal{D}}(U) \hookrightarrow L^{q\mu}_{loc}(\mathbb{R}^n)$, we deduce that
\begin{equation}\label{est}
\|u_\lambda\|_q
\leq \|u_\lambda\|_\infty^{1-\mu}
\|u_\lambda\|_{\mu q}^{\mu}
\leq C_{q\mu}
\|u_\lambda\|_{L^{\infty}(\Omega)}^{1-\mu}
\eta(u_\lambda)^{\mu},
    \end{equation}
for some positive constant $C_{q\mu}>0$.

Substituting \eqref{est}  into \eqref{eq:3.17}, we obtain
\begin{equation} \label{eq:3.19}
\|u_\lambda\|_{L^{\infty}(\Omega)}
\leq k \|u_\lambda\|_{L^{\infty}(\Omega)}^{1-\mu}
\eta(u_\lambda)^{\mu}
+ \frac{\Theta}{2},
\end{equation}
for some positive constant $k$.

From \eqref{eq:3.9} and \eqref{eq:3.19}, it follows that
\[
\lim_{\lambda \to \mu_0^+} \|u_\lambda\|_\infty
\leq \frac{\Theta}{2}.
\]
Consequently, there exists $0<\varepsilon_1<\varepsilon_0$ such that
$u_\lambda(x) \leq \Theta$ for almost every $x \in \Omega$ and for every
\[
\lambda \in J' := ]\mu_0, \mu_0 + \varepsilon_1[.
\]
In conclusion, for every $\lambda \in J'$, the function
$u_\lambda \in \mathcal{X}^{1,2}_{\mathcal{D}}(U)$ is a weak solution of problem
\eqref{1} and satisfies
\[
\eta(u_\lambda) < r,
\]
since $J' \subset J$. This completes the proof.

\subsection{\texorpdfstring{Part II: $\mathbf{(h_1)} \Leftarrow \mathbf{(h_2)}$}{Part II: (h1) is implied by (h2)}} \label{sec:part2}

We proceed by contradiction. Assume  that there exist  constants \(c_1, c_2>0\)  such that
\begin{equation} \label{eq:linF}
F(\xi) = c_2\xi^2~~~\forall~~\xi \in [0, c_1].
\end{equation}
 As a consequence,
\begin{equation} \label{eq:linf}
f(\xi) = 2c_2\xi, ~~~\forall~~\xi \in [0, c_1].
\end{equation}
Let \(\{a_j\}_{j \in \mathbb{N}} \subset (0, +\infty)\) be a sequence satisfying \(\lim_{j \to \infty} a_j = 0\). Then, for each \(j \in \mathbb{N}\), there exists \(\varepsilon_j > 0\) such that, for every
\[
\lambda \in J_j := ]\mu_0, \mu_0 + \varepsilon_j[,
\]
the problem
\begin{equation} \label{3}
\left\{
\begin{aligned}
\mathcal{L}u &= \lambda f(u),  \quad \text{in}~~ \Omega, \\
u &= 0 \quad \text{in } U^c, \\
\mathcal{N}_s(u) &= 0 \quad \text{in } \mathcal{N}, \\
\frac{\partial u}{\partial \nu} &= 0 \quad \text{in } \partial \Omega \cap \overline{\mathcal{N}},
\end{aligned} \tag{$P_\lambda^\#$}
\right.
\end{equation}
admits a positive weak solution \(u_{\lambda,j} \in \mathcal{X}^{1,2}_{\mathcal{D}}(U)\) such that
\(\eta(u_{\lambda,j}) < a_j\).

In particular, it follows that
\begin{equation} \label{eq:3.20}
\lim_{j \to \infty} \sup_{\lambda \in J_j} \eta(u_{\lambda,j}) = 0.
\end{equation}
Next, arguing as in the previous \textbf{(part I)}, we can find a positive constant \(k\), independent of both \(j\) and \(\lambda\), and a sufficiently small \(\mu\) such that
\begin{equation} \label{eq:3.21}
\|u_{\lambda,j}\|_{L^{\infty}(\Omega)} \leq k\|u_{\lambda,j}\|_{L^{\infty}(\Omega)}^{1-\mu}\eta(u_{\lambda,j})^{\mu} + \frac{c_1}{2},~~\forall~\lambda \in J_j, ~j \in \mathbb{N}.
\end{equation}
Combining \eqref{eq:3.20} with the above inequality, we deduce that
\begin{equation} \label{eq:3.22}
\lim_{j \to \infty} \sup_{\lambda \in J_j} \|u_{\lambda,j}\|_{{L^{\infty}(\Omega)}} \leq \frac{c_1}{2}.
\end{equation}
Therefore, there exists \(j_0 \in \mathbb{N}\) such that
\begin{equation} \label{eq:3.23}
\|u_{\lambda,j_0}\|_{{L^{\infty}(\Omega)}} \leq c_1, ~~\forall~\lambda \in J_{j_0}
\end{equation}
As a consequence, for every
\[
\lambda \in [\mu_0, \mu_0 + \varepsilon_{j_0}],
\]
the problem \eqref{3} admits a weak solution
\(u_{\lambda,j_0} \in \mathcal{X}^{1,2}_{\mathcal{D}}(U)\).
This leads to a contradiction, since the  problem \eqref{3} admits solutions only for countably many positive values of the parameter \(\lambda\).
Hence,  proof of Theorem \ref{mainth}
is completed now.
\qed

\section*{Declarations}

\noindent\textbf{Ethical Approval.}
Not applicable.

\medskip
\noindent\textbf{Competing interests.}
The authors declare that they have no competing interests.

\medskip
\noindent\textbf{Authors contributions.}
The authors contributed equally to this work.

\medskip
\noindent\textbf{Availability of data and materials.}
Data sharing is not applicable to this article as no new data were created or analyzed in this study.

\section*{Acknowledgement}
Lovelesh Sharma gratefully acknowledges the financial support provided
by the Institute Postdoctoral Fellowship at the Indian Institute of
Science Education and Research (IISER) Pune.

\end{document}